\def\XXint#1#2#3{{\setbox0=\hbox{$#1{#2#3}{\int}$}
     \vcenter{\hbox{$#2#3$}}\kern-.5\wd0}}
\newcounter{lemma}[section]
\newcounter{corol}[section]
\newcounter{rem}[section]
\newcounter{theo}[section]
\newcounter{propo}[section]
\begin{document}

\title[ACP of spatial open discrete mappings]{
Absolute continuity on paths of spatial open discrete mappings}

\author[A. Golberg and E. Sevost'yanov]{Anatoly Golberg and Evgeny Sevost'yanov}

\begin{abstract}
We prove that open discrete mappings of Sobolev classes $W_{\rm loc}^{1, p},$ $p>n-1,$ with locally integrable inner dilatations admit
$ACP_p^{\,-1}$-property, which means that these mappings are absolutely continuous on almost all preimage paths with respect to $p$-module. In particular, our results
extend the well-known Poletski\u\i\ lemma for quasiregular
mappings. We also establish the upper bounds for $p$-module of such mappings in terms of integrals depending on the inner dilatations and arbitrary admissible functions.
\end{abstract}

\date{\today \hskip 3mm \currenttime \hskip 4mm (\texttt {ACPMCPM.tex})}

\maketitle

\bigskip

\bigskip
{\small {\textbf {2010 Mathematics Subject Classification: }
Primary: 30C65, 26B05, Secondary: 31B15}}

\bigskip
{\small {\textbf {Key words:} $p$-module, absolute continuity, Sobolev classes, quasiconformal and quasiregular mappings}}

\bigskip

\medskip

\section{Introduction}

\subsection{} The absolute continuity on almost all lines of mappings is one of crucial properties of quasiconformal mappings and more general quasiregular mappings (see, e.g. \cite{MRV69}, \cite{Resh89}, \cite{Ric93}, \cite{Vai71}, \cite{Vuo88}). Later many authors have extended this important property to more general classes of continuous mappings of finitely dimensional domains including mappings of finite distortion, mappings with finite length distortion, mappings quasiconformal in the mean, mappings with controlled $p$-module and others (\cite{AC05}, \cite{BGMV03}, \cite{Gol05}, \cite{GS12}, \cite{HK14}, \cite{IR05}, \cite{IM01}, \cite{MRSY09}, \cite{Mikl08}, \cite{SS14}). It remains still open whether the generic mappings from Sobolev classes possess absolute continuity and related differential properties (cf. \cite{BKR07}, \cite{Gol09}, \cite{KR05}, \cite{Pol70}).

In this paper, we discuss absolute continuity of Sobolev mappings on paths. Due to terminology of \cite{MRSY09}, this class $ACP\subset ACL,$ where $ACL$ denotes the class of mappings absolutely continuous on lines. The $ACP$-property is a basic tool for studying geometric features of mappings (cf. \cite{AC05}, \cite{Fu57}). It was shown in \cite{Pol70}, that quasiregular mappings provide absolute continuity for preimages; namely, if $f:D\rightarrow {\mathbb R}^n,$ $n\ge 2,$ is a quasiregular mapping of a domain $D\subset\mathbb R^n,$ then for almost all curves
$\gamma_*\in f(D)$ a curve $\gamma,$ such that
$f\circ\gamma=\gamma_*,$ is absolutely continuous. This property can be treated as $ACP^{-1}$-property. For this property and the definition of mappings with finite length distortion we refer to \cite{MRSY04} and \cite[Ch.~8]{MRSY09}.

An extension of Poletski\u\i's result to general open discrete mappings of Sobolev class $W_{\rm loc}^{1,n}(D)$ is given \cite{Sev09}, under assumption that the $n$-dimensional Lebesgue measure of the branching set $m (B_f)=0$ and local integrability with appropriate degree of one of quasiconformality coefficients of mappings. Other resent extensions related to absolute continuity can be found in \cite{Cris08}, \cite{Cris10}, \cite{Guo}, \cite{Sal08}, \cite{SS14} and \cite{Ten14}.

\subsection{} Throughout this paper, $D$ will be a domain in
${\mathbb R}^n,$ $n\ge 2;$ $m$ denotes the Lebesgue measure in ${\mathbb R}^n.$
A mapping $f:D\rightarrow {\mathbb R}^n$ is {\it discrete} if
$f^{-1}(y)$ consists of isolated points for each $y\in{\mathbb R}^n,$
and $f$ is {\it open} if it maps open sets onto open sets. The
notation $f:D\rightarrow {\mathbb R}^n$ assumes that $f$ is continuous,
and all mappings are orientation
preserving, i.e., the topological index $i(y, f,G)>0$ for an
arbitrary domain $G\Subset D$ and
$y\in f(G)\setminus f(\partial G);$ see, e.g. \cite[II.2]{Resh89}. Let
$f:D\rightarrow {\mathbb R}^n$ be a mapping and suppose that there is a
domain $G\subset D,$ $\overline{G}\subset D,$ for which $
f^{\,-1}\left(f(x)\right)=\left\{x\right\}.$ Then the quantity
$\mu(f(x), f, G),$ which is referred to be the local topological
index, which does not depend on the choice of $G$ and is
denoted by $i(x, f).$

\medskip
A curve $\gamma$ in ${\mathbb R}^n$ $(\,\overline{{\mathbb R}^n}\,)$ is a continuous mapping $\gamma
:I\rightarrow{\mathbb R}^n\;(\,\overline{{\mathbb R}^n}\,),$ where $I$ is an interval in
${\mathbb R} .$ Its locus $\gamma(I)$ is denoted by $|\gamma|.$
Let $\Gamma$ be a family of curves $\gamma$ in ${\mathbb R}^n .$ A Borel
function $\rho:{\mathbb R}^n \rightarrow [0,\infty]$ is called {\it
admissible} for $\Gamma $ (abbr. $\rho \in {\rm adm}\, \Gamma $) if
$\int\limits_{\gamma} \rho(x)|dx| \ge 1$
for each locally rectifiable $\gamma\in\Gamma.$ For $p\ge 1,$
we define the quantity
\begin{equation*}
\mathcal M_p(\Gamma)\,=\,\inf\limits_{ \rho \in {\rm adm}\, \Gamma}
\int\limits_{{\mathbb R}^n} \rho^p(x) dm(x),
\end{equation*}
which is called {\it $p$-mo\-du\-le} of $\Gamma$ \cite[6.1]{Vai71} (see also \cite{Ric93}, \cite{Vuo88}).

\medskip
For the points of differentiability of $f$ we define
\begin{equation*}
l\left(f^{\,\prime}(x)\right)\,=\,\min\limits_{h\in {\mathbb
R}^n \backslash \{0\}} \frac {|f^{\,\prime}(x)h|}{|h|}\,,
||f^{\,\prime}(x)||\,=\,\max\limits_{h\in {\mathbb
R}^n \backslash \{0\}} \frac {|f^{\,\prime}(x)h|}{|h|}\,,
J(x,f)=\det f^{\,\prime}(x)\,,
\end{equation*}
and for any $x\in D$ and fixed $p\ge 1$ the $p$-inner dilatation  of $f$ by
\begin{equation*}
K_{I, p}(x,f)\quad =\quad\left\{
\begin{array}{rr}
\frac{|J(x,f)|}{{l\left(f^{\,\prime}(x)\right)}^p}, & J(x,f)\ne 0,\\
1,  &  f^{\,\prime}(x)=0, \\
\infty, & {\rm otherwise}.
\end{array}
\right.
\end{equation*}
The notation $K_I(x, f)=K_{I, n}(x, f)$ relates to the classical inner dilatation coefficient.

\medskip
Recall that a mapping $f:D\rightarrow {\mathbb R}^n$ is said to possess Lusin's
{\it $N$-pro\-per\-ty} if
$m\left(f\left(S\right)\right)=0$ whenever $m(S)=0$ for
$S\subset{\mathbb R}^n.$ Similarly, $f$ has {\it
$N^{-1}$--pro\-per\-ty} if $m\left(f^{\,-1}(S)\right)=0$ whenever
$m(S)=0.$

\subsection{}
Let $\Gamma$ be a curve family in $D,$ $\Gamma^{\,\prime}$ be
a curve family in ${\mathbb R}^n$ and  $q$ be a positive integer such
that the following is true. Suppose that for every curve
$\beta:I\rightarrow D$ in $\Gamma^{\,\prime}$ there exist curves
$\alpha_1,\ldots,\alpha_q$ in $\Gamma$ such that $f\circ
\alpha_j\subset \beta$ for all $j=1,\ldots,q,$ and for every $x\in
D$ and all $t\in I$ the equality $\alpha_j(t)=x$ holds at most
$i(x,f)$ indices $j.$

\medskip
\begin{theo}{}\label{th1A}
{\em Let $f:D\rightarrow {\mathbb R}^n$ be an open discrete mapping of the class
$W_{\rm loc}^{1, p},$ $n-1<p\le n,$ for which $N$ and $N^{\,-1}$-properties
hold. Suppose that $K_I(x, f)\in L_{\rm loc}^1.$ Then
\begin{equation}\label{eq5*AA}
\mathcal M_p(\Gamma^{\,\prime})\le \frac{1}{q}\int\limits_D K_{I,
p}(x,f)\cdot\rho^p(x)\, dm(x)
\end{equation}
for every $\rho\in {\rm adm\,}\Gamma.$ In particular,
\begin{equation*}
\mathcal M_p(f(\Gamma))\le \int\limits_D K_{I,
p}(x,f)\cdot\rho^p(x)\, dm(x)
\end{equation*}
for every family of curves $\Gamma$ in $D$ and $\rho\in {\rm
adm\,}\Gamma.$}
\end{theo}

\medskip
\begin{theo}{}\label{th1B}
{\em Let $f:D\rightarrow {\mathbb R}^n$ be an open discrete mapping of the class
$W_{\rm loc}^{1, p},$ $p>n,$ for which $N^{\,-1}$-property
holds. Assume that $K_I(x, f)\in L_{\rm loc}^1$ and $K_{I, p}(x,
f)\in L_{\rm loc}^1.$ Then the inequality (\ref{eq5*AA}) holds. }
\end{theo}

\section{Auxiliary results}

Let us first recall some necessary definitions, notations and statements.

\subsection{} We say that a property $\mathcal P$ holds for {\it $p$-almost every
($p$-a.e.)} curves $\gamma$ if $\mathcal P$ holds
for all curves except a family of zero $p$-module.

If $\gamma :I\rightarrow{\mathbb R}^n$ is a locally rectifiable
curve, then there is a unique nondecreasing length function
$l_{\gamma}$ of $I$ onto a length interval $I_{\gamma}\subset{\mathbb R}$ with a prescribed normalization $l
_{\gamma}(t_0)=0\in I_{\gamma},$ $t_0\in\Delta,$ such that $l
_{\gamma}(t)$ is equal to the length of the arc $\gamma
|_{[t_0,t]}$ of $\gamma$ for $t>t_0,$ $t\in I ,$ and $l
_{\gamma}(t)$ is equal to minus length of $\gamma |_{[t,t_0]}$ when
$t<t_0,$ $t\in I .$ Let $g: |\gamma |\rightarrow{\mathbb R}^n$ be a
continuous mapping and $\widetilde{\gamma}
=g\circ\gamma$ be locally rectifiable. Then there is a unique
nondecreasing function $L_{\gamma ,g}: I_{\gamma}\rightarrow I_{\widetilde{\gamma}}$ such that
$L_{\gamma ,g}\left(l_{\gamma}(t)\right) =
l_{\widetilde{\gamma}}(t)$ for all $t\in I.$ The curve $\gamma$ in
$D$ is called a (whole) {\it lifting} of the curve
$\widetilde{\gamma}$ in ${\mathbb R}^n$ under $f:D\rightarrow {\mathbb
R}^n$ and $\widetilde{\gamma} = f\circ\gamma.$

\medskip
Following \cite{SS14}, we say that a mapping $f:D\rightarrow{\mathbb R}^n$
satisfies the {\it $L^{(2)}_p$-pro\-pe\-r\-ty,} if, for
$p$-a.e. curve $\widetilde{\gamma}$ in $f(D),$ each lifting $\gamma$
of $\widetilde{\gamma}$ is locally rectifiable and the function
$L_{\gamma ,f}$ satisfies $N^{-1}$-pro\-per\-ty.

For any
closed rectifiable path $\gamma:[a,b]\rightarrow {\mathbb R}^n,$ we define
the length function $l_{\gamma}(t)$ by
$l_{\gamma}(t)=S\left(\gamma, [a,t]\right),$ where $S(\gamma,
[a,t])$ is the length of the path $\gamma|_{[a, t]}.$ Let
$\alpha:[a,b]\rightarrow {\mathbb R}^n$ be a rectifiable curve in
${\mathbb R}^n,$ $n\ge 2,$ and $l(\alpha)$ be its length. A {\it normal
representation} $\alpha^0$ of $\alpha$ is defined as a curve
$\alpha^0:[0, l(\alpha)]\rightarrow {\mathbb R}^n$ which is obtained
from $\alpha$ by changing parameter so that
$\alpha(t)=\alpha^0\left(S\left(\alpha, [a, t]\right)\right)$ for
every $t\in [0, l(\alpha)].$

\medskip
Suppose that $\alpha$ and $\beta$ are curves in ${\mathbb R}^n.$ Then
the notation $\alpha\subset\beta$ means that $\alpha$ is a subpath
of $\beta.$ In what follows, $I$ denotes either an open or  closed
or semi-open interval on the real axes. The following definition
is due to \cite[Ch. II]{Ric93}.

\medskip
Let $f:D\rightarrow {\mathbb R}^n$ be a mapping whose inverse $f^{-1}(y)$
does not contain a nondegenerate curve, $\beta:I_0\rightarrow
{\mathbb R}^n$ be a closed rectifiable curve and $\alpha:I\rightarrow
D$ be such that $f\circ \alpha\subset \beta.$ If the length function
$l_{\beta}:I_0\rightarrow [0, l(\beta)]$ is constant on $J\subset
I,$ then $\beta$ is also constant on $J$ and, consequently, the curve
$\alpha$ is constant on $J$ too. Thus, there is a unique function
$\alpha^{\,*}:l_\beta(I)\rightarrow D$ such that
$\alpha=\alpha^{\,*}\circ (l_\beta|_I).$ We say that $\alpha^{\,*}$
is the {\it $f$--re\-pre\-se\-n\-ta\-tion of $\alpha$ with respect
to $\beta$ } if $\beta=f\circ\alpha.$

\medskip
\begin{rem}\label{rem1}
Note that properties of $L_{\gamma, f}$ connected with the
length functions $l_{\gamma}(t)$ and $l_{\widetilde{\gamma}}(t),$
$\widetilde{\gamma}=f\circ\gamma,$ do not essentially depend on the
choice of $t_0\in [a, b).$ Moreover, one can take $t_0=a,$ because for any $t_0\in (a, b),$ we have $S(\gamma, [a,
t])=S(\gamma, [a, t_0])+l_{\gamma}(t).$ Further we put
$t_0=a$ and use the notion $l_{\gamma}(t)$ for the length of the
path $\gamma|_{[a, t]}$ whenever the curve $\gamma$ is closed.
\end{rem}

\medskip
Given a set $E$ in ${\mathbb R}^n$ and a curve $\gamma
:I\rightarrow {\mathbb R}^n,$ we identify $\gamma\cap E$ with
$\gamma\left(I\right)\cap E.$
If $\gamma$ is locally rectifiable, we set
\begin{equation*}
l\left(\gamma\cap E\right) =  m_1(E_ {\gamma}),
\end{equation*}
where
$E_ {\gamma} = l_{\gamma}\left(\gamma ^{-1}\left(E\right)\right).$
Note that
$E_ {\gamma} = \gamma _0^{-1}\left(E\right),$
where $\gamma _0 :I_{\gamma}\rightarrow {\mathbb R}^n$ is the
natural parametrization of $\gamma $ and
\begin{equation*}
l\left(\gamma\cap E\right) = \int\limits_I \chi
_E\left(\gamma\left(t\right) \right) |dx|:= \int\limits_{I_{\gamma}} \chi_{E_\gamma }(s)\, ds\,.
\end{equation*}

\subsection{} We shall use the following result established in \cite{SS14}.

\medskip
\begin{propo}\label{pr3}
{\em A mapping $f:D\rightarrow {\mathbb R}^n$ has
the $L^{(2)}_p$--pro\-per\-ty if and only if $f^{-1}(y)$ does not
contain a nondegenerate curve for every $y\in {\mathbb R}^n,$ and the
$f$-representation $\gamma^{\,*}$ is rectifiable and absolutely
continuous for $p$-a.e. closed curve
$\widetilde{\gamma}=f\circ\gamma.$}
\end{propo}

\medskip
The following three statements can be found in \cite{Pol70}; see Lemmas 1--3.

\medskip
\begin{propo}\label{pr1*0}
{\em Let $\gamma_1:I=[0,l]\rightarrow{\mathbb R}^n$ be a rectifiable curve and let
$B=\overline{B}\subset I,$ $l_{\gamma_1}(B)=0.$ Suppose that
$\gamma_2:I\rightarrow {\mathbb R}^n$ is a rectifiable curve on
$I\setminus B,$ and $\gamma_1(t)=\gamma_2(t)$ for $t\in B.$ Then
$\gamma_2$ is also rectifiable and $l_{\gamma_2}(B)=0.$ }
\end{propo}

\medskip
\begin{propo}\label{pr2*0}
{\em Let $\gamma:I=[0,l]\rightarrow{\mathbb R}^n$ be a  curve,
$B=\overline{B}\subset I,$ and let $E\subset I$ be a set with
$\overline{E}\subset E\cup B,$ and  $E\cap B=\varnothing.$ If
$\gamma$ is rectifiable on $I\setminus(E\cup B),$ and moreover, for
every point $t\in I\setminus B$ there exists a neighborhood $V,$ in
which $\gamma$ is rectifiable and $l_{\gamma}(V)=l_{\gamma}
(V\setminus E),$ then $\gamma$ is rectifiable on $I\setminus B$ and
$l_{\gamma} (I\setminus
B)=l_{\gamma}\left(I\setminus (E\cup B)\right).$ }
\end{propo}

\medskip
\begin{propo}\label{pr3*0}
{\em Let $\gamma:I\rightarrow{\mathbb R}^n$ be a rectifiable curve.
If $l_{\gamma}(B)=0$ for every $B\subset I$ with ${\rm
m_1\,}(B)=0,$ then $l_{\gamma}(t)$ is absolutely continuous
function.}
\end{propo}

\subsection{} Define for a mapping $f:D\rightarrow {\mathbb R}^n,$ a
set $E\subset D$ and a point $y\in {\mathbb R}^n,$ the
multiplicity function $N(y, f, E)$ as the number of preimages of
$y$ in $E,$ i.e.,
\begin{equation*}
N(y, f, E) = {\rm card}\,\left\{x \in E: f(x) =
y\right\}\,,
\end{equation*}
and put
\begin{equation*}
N(f,E)=\sup\limits_{y\in{\mathbb
R}^n}\,N(y,f,E)\,.
\end{equation*}
Recall that a domain $G\Subset D$ is said to be {\it
normal domain of $f,$} if $\partial f(G)=f\left(\partial G\right).$
If $G$ is a normal domain, then $\mu(y, f, G)$ is a constant for any
$y\in f(G).$ This constant will be denoted by $\mu(f, G).$ Let
$f:D\rightarrow {\mathbb R}^n$ be a discrete open mapping, then $\mu(f,
G)=N(f,G)$ for every normal domain $G\subset D,$  see, e.g.
\cite[Proposition~I.4.10]{Ric93}.

\medskip
Let $V\subset D$ be a normal domain, and $f(V)=V^{\,*}.$
Following \cite{Pol70}, we define $g_V:V^{\,*}\rightarrow {\mathbb R}^n$ as follows: if $y\in
V^{\,*},$ and $f^{\,-1}(y)\cap V=\{x_i\},$ then
\begin{equation}\label{eq5.9A}
g_V(y)=\frac{1}{q} \sum\limits_{i}i(x_i,f)x_i\,,
\end{equation}
where $q=\sum\limits i(x_i,f)=\mu(f,V).$ This mapping can be regarded as the inverse for the nonhomeomorphic mapping $f.$  The following statement is due to \cite{Guo}.

\medskip
\begin{propo}\label{pr5*}
{\em Let
$f:D\rightarrow {\mathbb R}^n$ be an open discrete mapping, $f\in
W_{loc}^{1,n-1}(D),$ obeying $K_I(x,f)\in L_{loc}^{1},$ and
$m(f(B_f))=0.$ Then $g_V(y)$ is continuous $V^{\,*}$ and $g_V(y)\in
ACL^n (V^{\,*}).$}
\end{propo}

\medskip
Similar to $p$-inner dilatation, we define for any $x\in D$ and fixed $p\ge 1$ the $p$-outer dilatation of $f$ by
\begin{equation*}
K_{O, p}(x,f)\quad =\quad \left\{
\begin{array}{rr}
\frac{\Vert f^\prime(x)\Vert^p}{|J(x,f)|}, & J(x,f)\ne 0,\\
1,  &  f^{\,\prime}(x)=0, \\
\infty, & {\rm otherwise}.
\end{array}
\right.\,
\end{equation*}
In the case $p=n$ the classical outer dilation coefficient coincides with $p$-outer dilatation, i.e. $K_O(x, f)=K_{O, n}(x,f).$

\section{Main results}

Poletski\u\i\ lemma's yields that $ACP^{-1}$-property can be regarded as the well-known Fuglede theorem in the ``inverse direction''. Fuglede's theorem states that for a continuous mapping $f:D\rightarrow {\mathbb R}^n$ of Sobolev class $W^{1,p}_{\rm loc}(D)$ and a family $\Gamma$ of all locally rectifiable curves in $D$ having a closed subpath on which $f$ is not absolutely continuous, $\Gamma$ is exceptional with respect to $p$-module, i.e. $\mathcal M_p(\Gamma)=0.$ The following theorem extends Poletski\u\i\ lemma's to the mappings satisfying the assumptions of Theorems \ref{th1A} or \ref{th1B}. This theorem involves the $ACP_p^{-1}$-property.

\medskip
\begin{theo}\label{lem4.5.1}
{\em Let $f:D\rightarrow {\mathbb R}^n$
be an open discrete mapping satisfying the assumptions either of Theorem \ref{th1A} or of Theorem \ref{th1B}. Then $f\in ACP_p^{-1}.$}
\end{theo}

\begin{proof}
In view of Proposition \ref{pr3}, it suffices to prove that for any $p$-a.e. closed curve
$\widetilde{\gamma}$ such that $f\circ\gamma=\widetilde{\gamma},$ the
$f$-representation $\gamma^{\,*}$ of $\gamma$ with respect to
$\widetilde{\gamma}$ is rectifiable and absolutely continuous. We also can restrict ourselves by considering a subfamily of
$\widetilde{\Gamma},$ which belongs to a compact subdomain
$D^{\,\prime}$ of the domain $D.$ The general case is obtained from this by exhausting $\left\{V_i\right\}_{i=1}^{\infty}$ of the domain
$f(D)$ by compact subdomains $V_i\Subset D$.

\medskip
Denote by $\widetilde{\gamma}^{\,0}$ the normal representation of a
closed rectifiable curve $\widetilde{\gamma},$ which lies in $D^{\,\prime},$ $\widetilde{\gamma}=f\circ\gamma,$ and
by $\gamma^{\,*}$ the $f$-representation of $\gamma$ with respect
to $\widetilde{\gamma}.$ Now $\widetilde{\gamma}^{\,0}:[0,
l(\gamma)]\rightarrow {\mathbb R}^n,$ $\gamma^{\,*}:[0,
l(\gamma)]\rightarrow {\mathbb R}^n.$ Set $I:=[0, l(\gamma)].$

We show that for a.e. closed curve $\widetilde{\gamma},$ the curve
$\gamma^{\,*}$ is rectifiable on $I\setminus\gamma^{\,*}(B_f),$
where $\gamma^{\,*}(B_f)=\left\{s:\gamma^{\,*}(s)\in B_f\right\}.$
Let the sets $D^{\,\prime}\setminus B_f$ be covered by a countable system of
neighborhoods  $\left\{A_l\right\}$ at which the corresponding mapping $f_l=f|_{A_l}$ is
homeomorphic. Put $h_l=f_l^{\,-1}.$ Since under assumptions of Theorem \ref{lem4.5.1} $f\in W_{\rm loc}^{1, p},$
$p>n-1,$ one concludes that $h_l\in W_{\rm loc}^{1, 1}$ (\cite[Theorem~1.1]{Zie69}).

The following lemma shows that the mapping $h_l$ has richer differential properties.

\medskip
\begin{lemma}\label{lem4.5.2}
{\em Let $f:D\rightarrow {\mathbb R}^n$
be an open discrete mapping satisfying the assumptions either of Theorem \ref{th1A} or of Theorem \ref{th1B}. Then $h_l\in W_{\rm loc}^{1, p}.$}
\end{lemma}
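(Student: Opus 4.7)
My plan is to establish $h_l\in W^{1,p}_{\rm loc}$ by reducing the $L^p$-integrability of $Dh_l$ on compact subsets of $f(A_l)$ to the local integrability of $K_{I,p}(\cdot,f)$ on $A_l$ via the area formula. Since $f_l=f|_{A_l}$ is a homeomorphism of the open set $A_l\subset D\setminus B_f$ onto $f(A_l)$ and lies in $W^{1,p}_{\rm loc}$ with $p>n-1$, the already established fact $h_l\in W^{1,1}_{\rm loc}$ tells me that $h_l$ is differentiable almost everywhere on $f(A_l)$ and that its classical derivative agrees almost everywhere with its weak derivative; hence it suffices to verify that $\int_K\|Dh_l(y)\|^p\,dy<\infty$ for every compact $K\subset f(A_l)$.

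The first step would be to transfer the Lusin conditions: the $N^{\,-1}$-property of $f$ gives $h_l$ the $N$-property (since $h_l(S)=f_l^{\,-1}(S)\subset f^{\,-1}(S)$), while the $N$-property of $f$, assumed directly in Theorem \ref{th1A} and automatic in Theorem \ref{th1B} because $p>n$ (Marcus--Mizel), hands the $N^{\,-1}$-property to $h_l$. Both Lusin conditions for $h_l$ and $f_l$ legitimate the area formula in either direction. Applied to $h_l$, it shows that the set $Z=\{y:J(y,h_l)=0\}$ is sent by $h_l$ to a null set and is therefore itself null by the $N^{\,-1}$-property of $h_l$; the analogous argument with $f_l$ shows that $\{x\in A_l:J(x,f)=0\}$ is Lebesgue null. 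Outside these exceptional sets the classical inverse function theorem gives $Dh_l(y)=(Df(x))^{\,-1}$ at $y=f(x)$, whence $\|Dh_l(y)\|=1/l(Df(x))$, and the change of variables $y=f(x)$ produces
\begin{equation*}
\int\limits_K\|Dh_l(y)\|^p\,dy\ =\ \int\limits_{h_l(K)}\frac{|J(x,f)|}{l(Df(x))^p}\,dx\ =\ \int\limits_{h_l(K)}K_{I,p}(x,f)\,dx.
\end{equation*}

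It then remains to show that $K_{I,p}(\cdot,f)\in L^1_{\rm loc}(A_l)$. Under Theorem \ref{th1B} this is part of the hypotheses, so the conclusion is immediate. Under Theorem \ref{th1A}, where $n-1<p\le n$, I would argue pointwise: when $l(Df(x))\ge 1$, one has $l(Df(x))^p\ge 1$ and hence $K_{I,p}(x,f)\le|J(x,f)|$, while when $l(Df(x))<1$, the inequality $p\le n$ yields $l(Df(x))^p\ge l(Df(x))^n$ and therefore $K_{I,p}(x,f)\le K_I(x,f)$. Thus $K_{I,p}(\cdot,f)\le K_I(\cdot,f)+|J(\cdot,f)|$ almost everywhere on $A_l$; the first summand is in $L^1_{\rm loc}$ by hypothesis, and the second is in $L^1_{\rm loc}(A_l)$ because $f_l$ is a homeomorphism, so that $\int_K|J(x,f)|\,dx=m(f(K))<\infty$ for every compact $K\subset A_l$. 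The main technical hurdle I anticipate is the careful bookkeeping around these exceptional sets---verifying that both $\{J(y,h_l)=0\}$ and $\{J(x,f)=0\}\cap A_l$ are Lebesgue null so that the two-sided change of variables is valid; once that is in place, the remaining estimate is clean.
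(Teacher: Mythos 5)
Your argument is correct, and its core mechanism is the same as the paper's: transfer the Lusin conditions to $h_l=f_l^{\,-1}$, use the a.e.\ nondegenerate differentiability of $f$ together with $h_l^{\,\prime}(f(x))=\left(f^{\,\prime}(x)\right)^{-1}$, and convert the energy of $h_l$ into an integral of an inner dilatation of $f$ by the change of variables (Federer's area formula). The only genuine divergence is in the case $n-1<p\le n$: the paper computes the $n$-energy, obtaining $\int_{f(A_l)}\Vert h_l^{\,\prime}(y)\Vert^n\,dm(y)=\int_{A_l}K_I(x,f)\,dm(x)<\infty$, so that $h_l\in W^{1,n}_{\rm loc}$ and a fortiori $h_l\in W^{1,p}_{\rm loc}$; you instead compute the $p$-energy directly, which produces $\int_{h_l(K)}K_{I,p}(x,f)\,dm(x)$, and then control $K_{I,p}$ by the pointwise bound $K_{I,p}(x,f)\le K_I(x,f)+|J(x,f)|$ (splitting on $l(f^{\,\prime}(x))\ge 1$ versus $l(f^{\,\prime}(x))<1$) plus the local integrability of the Jacobian of the injective map $f_l$. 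Both routes rest on the same hypothesis $K_I\in L^1_{\rm loc}$; the paper's version yields the slightly stronger conclusion $h_l\in W^{1,n}_{\rm loc}$ in that range, while yours avoids invoking the inclusion of Sobolev classes and treats the two ranges of $p$ more uniformly. Your bookkeeping of the exceptional sets (nullity of $\{J(\cdot,h_l)=0\}$ and of $\{J(\cdot,f)=0\}\cap A_l$) is exactly the point the paper handles via the $N$- and $N^{-1}$-properties, so no gap there.
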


\begin{proof}[Proof of Lemma \ref{lem4.5.2}]
By the assumptions, $f$ has $N$-property, therefore, $J(y, h_l)\ne 0$ for a.e. $y\in f(A_l).$
Consequently, $\Vert h_l^{\,\prime}(y)\Vert^n=K_O(y, h_l)\cdot |J(y,
h_l)|$ for a.e. $y\in f(A_l).$

Consider first the case $p\le n.$  By
\cite[Theorem~3.2.5]{Fe69} and the features of $N^{\,-1}$-property for $f$, we
have
\begin{equation}\label{eq6.1}
\begin{split}
\int\limits_{f(A_l)}\Vert h_l^{\,\prime}(y)\Vert^n\, dm(y)
&=\int\limits_{f(A_l)}K_O(y, h_l)\,|J(y, h_l)|\, dm(y)
\\&=\int\limits_{f(A_l)}K_O(h_l^{\,-1}(h_l(y)), h_l)\, |J(y, h_l)|\,
dm(y)
\\&=\int\limits_{A_l}K_O(h_l^{\,-1}(x), h_l)\,dm(x)\,.
\end{split}
\end{equation}
Since $f\in W_{\rm loc}^{1, p},$ $p>n-1,$ and $f$ is open, it is
differentiable a.e.; see e.g. \cite[Lemma~3]{Vai65}. Further,
$h_l^{\,\prime}(h_l^{\,-1}(x))=g^{\prime}(f(x)),$ hence,
$h_l^{\prime}(f(x))=\left(f^{\,\prime}(x)\right)^{\,-1}$ at all
points of nondegenerate differentiability of $f$. Since $f$ possesses
$N^{\,-1}$-property, we have $J(x, f)\ne 0$ a.e., and moreover,
\begin{equation}\label{eq6.2}
\Vert
h_l^{\,\prime}\left(f(x)\right)\Vert=\frac{1}{l\left(f^{\,\prime}(x)\right)}\,,\qquad
J(f(x), h_l)=\frac{1}{J(x, f)}\,;
\end{equation}
see \cite[Section~4.I]{Resh89}. Applying (\ref{eq6.1}) and
(\ref{eq6.2}), one gets
\begin{equation*}
\int\limits_{f(A_l)}\Vert h_l^{\,\prime}(y)\Vert^n\,
dm(y)=\int\limits_{A_l}K_I(x, f)\,dm(x)\,.
\end{equation*}
Thus, $h_l\in W_{\rm loc}^{1, n},$ because $h_l\in
W_{\rm loc}^{1, 1}.$ In particular, we proved that $h_l\in W_{\rm loc}^{1, p},$ for $n-1<p\le n.$

\medskip
Now let $p>n.$ Since $f$ satisfies $N$-property, $J(y, h_l)\ne 0$ for
a.e. $y\in f(A_l).$ Consequently, $\Vert
h_l^{\,\prime}(y)\Vert^p=K_{O, p}(y, h_l)\cdot |J(y, h_l)|$ for a.e.
$y\in f(A_l).$ Arguing similarly to above, one obtains
\begin{equation*}
\begin{split}
\int\limits_{f(A_l)}\Vert h_l^{\,\prime}(y)\Vert^p\, dm(y)
&=\int\limits_{f(A_l)}K_{O, p}(y, h_l)\, |J(y, h_l)|\, dm(y)
\\&=\int\limits_{f(A_l)}K_{O, p}(h_l^{\,-1}(h_l(y)), h_l)\, |J(y,
h_l)|\, dm(y)
\\&=\int\limits_{A_l}K_{O, p}(h_l^{\,-1}(x),
h_l)\,dm(x)\,.
\end{split}
\end{equation*}
Applying
(\ref{eq6.2}), one concludes
\begin{equation*}
\int\limits_{f(A_l)}\Vert h_l^{\,\prime}(y)\Vert^p\,
dm(y)=\int\limits_{A_l}K_{I, p}(x, f)\,dm(x)\,,
\end{equation*}
which yields  $h_l\in
W_{loc}^{1, p}$ for $p>n,$ because $h_l\in W_{loc}^{1, 1}.$ This completes the proof of Lemma \ref{lem4.5.2}.
\end{proof}

\medskip
We proceed the proof of Theorem \ref{lem4.5.1} and note that by \cite[section~28.2]{Vai71}, $h_l\in ACP_p.$ Observe that,
if $\gamma^{\,*}(s)\in A_l\cap A_j,$ then
$h_l\left(\widetilde{\gamma}^{\,0}(s)\right)=h_j\left(\widetilde{\gamma}^{\,0}(s)\right).$
Since $\widetilde{\gamma}^{\,0}$ is parameterized by $s,$ one
defines a mapping
$g:\widetilde{\gamma}^{\,0}|_{I\setminus\gamma^{\,*}(B_f)}\rightarrow
{\mathbb R}^n$ by $g(\widetilde{\gamma}^{\,0}(s))=h_k\left(\widetilde{\gamma}^{\,0}(s)\right),$ when $\gamma^{\,*}(s)\in A_k.$
Set $\frac{\partial g_l}{\partial y_j}\,(s)=\frac{\partial
h_{kl}}{\partial y_j} \left(\widetilde{\gamma}(s)\right).$ It follows from above that $\gamma^{\,*}$ is locally absolutely
continuous on every open interval of $I\setminus\gamma^{\,*}(B_f)$
for $p$-a.e. $\widetilde{\gamma}=f\circ\gamma^{\,*}.$ Applying Theorem~1.3(6) from
\cite{Vai71}, one gets
\begin{equation*}
\begin{split}
l_{\gamma^{\,*}}\left(I\setminus
\gamma^{\,*}(B_f)\right)&=\int\limits_{I\setminus
\gamma^{\,*}(B_f)}\, |\gamma^{\,*\,\prime}(s)|\,\,dm_1(s)
\\&\le \int\limits_{I\setminus\gamma^{\,*}(B_f)}\,
\left(\sum\limits_{l,j}\left(\frac{\partial g_l}{\partial y_j}(s)
\right)^2\right)^{1/2}\,dm_1(s)
\end{split}
\end{equation*}
for $p$-a.e. $\widetilde{\gamma}=f\circ\gamma^{\,*}.$ Since $h_l\in
W_{\rm loc}^{1,p}$ and $m\left(D^{\,\prime}\right)<\infty,$
\begin{equation*}
\int\limits_{I\setminus\gamma^{\,*}(B_f)}\,
\left(\sum\limits_{l,j}\left(\frac{\partial g_l}{\partial y_j}(s)
\right)^2\right)^{1/2}\,dm_1(s)\,<\,\infty
\end{equation*}
for $p$-a.e. curves $\widetilde{\gamma}=f\circ\gamma^{\,*}$ (cf.
\cite[Theorem~3(e)]{Fu57}). Consequently, $\gamma^{\,*}$ is
rectifiable on $I\setminus\gamma^{\,*}(B_f)$ for $p$-a.e. curves
$\widetilde{\gamma}\in \widetilde{\Gamma}.$

Moreover, $l_{\gamma^{\,*}}(C)=0$ for $p$-a.e. curves
$\widetilde{\gamma}$ and every set $C,$ $C\subset
I\setminus\gamma^{\,*}(B_f),$ such that $m_1(C)=0.$ In fact,
$l_{\gamma^{\,*}}(C)=\int\limits_{C}
|\gamma^{\,*\,\prime}(s)|\,\,dm_1(s)=0$
for $p$-a.e. curves $\widetilde{\gamma}=f\circ\gamma^{\,*}.$

\medskip
Now let $B_l$ be a subset of the branched set $B_f$ of $f$ with $i(x,f)=l,$
and
\begin{equation*}
\gamma^{\,*}(B_l)=\left\{s\in I: \gamma^{\,*}(s)\in
B_l\right\}.
\end{equation*}

We show that for $p$-a.e. $\widetilde{\gamma}$
the curve $\gamma^{\,*}$ is rectifiable on
$I\setminus\bigcup\limits_{k\,>\,l}\gamma^{\,*}(B_k)$ for all $l\in
{\mathbb N}$ and $l_{\gamma^{\,*}}(C)=0$ for every set $C$ with
$m_1(C)=0$ and $C\subset
I\setminus\bigcup\limits_{k\,>\,l}\gamma^{\,*}(B_k).$ This is obtained by induction with respect to $l.$ The case $l=1$ has been proved above.

Now
we assume that this property holds for $l=j$ and show its validness for $l=j+1.$

Since $f$ has $N^{-1}$-property, $J(x, f)\ne 0$
a.e.. Hence, since $f$ is differentiable a.e.,
we have $m(B_f)=0$ (see \cite[Lemma~2.14]{MRV69}).

Cover the set $B_j$ by at most countable system of normal domains
$\left\{U_l\right\}_{l=1}^{\infty}$ such that $\mu\left(f,
U_l\right)=j$ with $\mu\left(f, U_l\right)=N(f, U_l)$ (see \cite[Lemma~2.9]{MRV69}). By Proposition
\ref{pr5*}, the mapping $g_{l}=g_{U_l}$ defined by (\ref{eq5.9A}) is
absolutely continuous for $p$-a.e. curves in $U^*_l=f(U_l)$. Observe also that
$g_l\left(\widetilde{\gamma}^{\,0}(s)\right)=\gamma^{\,*}(s),$
whenever $\widetilde{\gamma}^{\,0}(s)\in f\left(B_j\cap U_l\right).$

Letting $\alpha_{k,l}:=g_l\left(\widetilde{\gamma}^{(k)}(s)\right),$
where
$\widetilde{\gamma}^{\,0}|_{f(U_l)}=\bigcup\limits_{k=1}^{\infty}\widetilde{\gamma}^{(k)}(s),$
$l=1,2,\ldots\,,$ we have by absolute continuity of $g_l$ on $p$-a.e.
curves that $l_{\alpha_{k,l}}\left(\alpha_{k,l} \left(B_j\cap
U_l\right)\right)=0$ for $p$-a.e. $\widetilde{\gamma}=f\circ\gamma.$ Now, by Proposition \ref{pr1*0},
\begin{equation*}
l_{\gamma^{\,*}}\left(\gamma^{\,*}\left(B_j\cap
U_l\right)\right)=0
\end{equation*}
for $p$-a.e. curves $\widetilde{\gamma}$ and $\gamma.$ Summation over
all neighborhoods $U_l$ yields
$l_{\gamma^{\,*}}(\gamma^{\,*}(B_j))=0$ for $p$-a.e. curves
$\widetilde{\gamma}=f\circ\gamma^{\,*}.$

Take in Proposition \ref{pr2*0}
$B:=\bigcup\limits_{k\,>\,j}\gamma^{\,*}(B_k)$ and
$E:=\gamma^{\,*}(B_j).$ The local topological index $i(x, f)$ is
lower semicontinuous (see, e.g.
\cite[Lemma~VI.8.13]{Ric93}). Now $B$ is closed, and by the induction
assumption $\gamma^{\,*}$ is rectifiable on $I\setminus
\bigcup\limits_{k\,>\,j}\gamma^{\,*}(B_k),$ and
$l_{\gamma^{\,*}}(C)=0$ for $C\subset I\setminus
\bigcup\limits_{k\,>\,j}\gamma^{\,*}(B_k)$ and $m_1 (C)=0.$
Since $D^{\,\prime}$ is a compact, there exists $M\in {\mathbb N}$ with
$i(x,f)\le M.$ By the previous step,
$l_{\gamma^{\,*}}\left(\bigcup\limits_{j\,=\,2}^{M}\gamma^{\,*}(B_j)\right)=0$
for $p$-a.e.
$\widetilde{\gamma}=f\circ\gamma^{\,*}.$ Hence, by Proposition \ref{pr3*0},
$\gamma^{\,*}$ is absolutely continuous and rectifiable for $p$-a.e.
closed curves $\widetilde{\gamma}=f\circ\gamma^{\,*},$
$\widetilde{\gamma}\in\widetilde{\Gamma}.$ This completes the proof of Theorem \ref{lem4.5.1}.
\end{proof}

\medskip
Now both Theorems \ref{th1A} and \ref{th1B} follow from Theorem
\ref{lem4.5.1} and Theorem~3.1 from \cite{SS14}.

\bigskip

\medskip
\medskip
{\small \leftline{\textbf{Anatoly Golberg}} \em{
\leftline{Department of Applied Mathematics,} \leftline{Holon
Institute of Technology,} \leftline{52 Golomb St., P.O.B. 305,}
\leftline{Holon 5810201, ISRAEL} \leftline{Fax: +972-3-5026615}
\leftline{e-mail: golberga@hit.ac.il}}}

\medskip
{\small \leftline{\textbf{ Evgeny Sevost'yanov}} \em{
\leftline{Department of Mathematical Analysis,} \leftline{Zhitomir
State University,} \leftline{40 Bol'shaya Berdichevskaya Str.}
\leftline{Zhitomir 10 008, UKRAINE} \leftline{Phone: +38 -- (066) --
959 50 34} \leftline{e-mail: esevostyanov2009@mail.ru}}}

\end{document}